\documentclass[11pt,reqno]{amsart}
\usepackage[T1]{fontenc}
\usepackage[utf8]{inputenc}
\usepackage{amsmath,amssymb,amsthm,mathrsfs}
\usepackage[margin=1.15in]{geometry}
\usepackage[colorlinks,linkcolor=blue,citecolor=blue]{hyperref}

\makeatletter
\renewcommand\paragraph{\@startsection{paragraph}{4}%
  \z@{.5\linespacing\@plus.7\linespacing}{-.5em}%
  {\normalfont\bfseries}}
\makeatother

\theoremstyle{plain}
\newtheorem{theorem}{Theorem}
\newtheorem{proposition}[theorem]{Proposition}
\newtheorem{lemma}[theorem]{Lemma}
\newtheorem{corollary}[theorem]{Corollary}
\theoremstyle{definition}

\DeclareMathOperator{\sign}{sign}
\DeclareMathOperator{\type}{type}
\DeclareMathOperator{\Stab}{Stab}
\DeclareMathOperator{\prm}{prim}
\DeclareMathOperator{\midd}{mid}
\newcommand{\T}{\mathbb{T}}
\newcommand{\R}{\mathbb{R}}
\newcommand{\Z}{\mathbb{Z}}
\newcommand{\cD}{\mathscr{D}}
\newcommand{\cF}{\mathscr{F}}
\newcommand{\gradP}{\nabla_{\!P}}

\begin{document}

\title[A parity constraint on topological numbers]
{A parity constraint on the topological numbers\\ of self-complementary triply-periodic surfaces}

\author{Roberto De Leo} 
\address{Department of Mathematics, Howard University,\\ Washington, DC 20059, USA}
\email{roberto.deleo@howard.edu}

\begin{abstract}
    The stability zone of Novikov's problem of planar sections of triply periodic surfaces are labeled by indivisible triples of integers, which we call the soul of the zone. 
    Which triples may occur is not known. 
    We show that, if a Morse function $f$ on $\T^3$ admits a translation $\tau(x)=x+v$ with $f\circ\tau=-f$, so that the sets $\{f<0\}$ and $\{f>0\}$ are congruent, then in the generic genus-$3$ situation every topological number $L$ satisfies $\langle L,2v\rangle\equiv1\pmod2$.
    The mechanism is that the electron/hole type of a null-homologous closed trajectory is invariant under the period lattice and is reversed by $\tau$. 
    For $f=\cos2\pi x+\cos2\pi y+\cos2\pi z$ and the $\mu$-cube, this means that the sum of the components of  each soul is odd.
\end{abstract}

\maketitle

We work in the setting of Novikov's problem of planar sections of triply-periodic surfaces~\cite{Nov82,Zor84,Dyn93,Dyn97,Dyn99}; see~\cite{DeL19,DMN22} for surveys. 
Let $f\colon\T^3=\R^3/\Z^3\to\R$ be a Morse function, let $M=\{f=0\}$ and, for a direction $H$, denote by $\cF_H(M)$ the foliation of $M$ cut by the planes orthogonal to $H$, that is, the level sets of $h=\langle H,\cdot\rangle$ on $M$. 
When the foliation is \emph{integrable}, namely when the leaves not null-homologous in $\T^3$ fill components with genus $1$ and topological rank $2$, all such components carry one and the same indivisible class $L\in H_2(\T^3;\Z)$ up to sign~\cite{Dyn96,Dyn99}, that we call the \emph{soul}, and the set of directions with soul $L$ is the \emph{stability zone} $\cD_L$. 
The zones are open with piecewise smooth boundary and their union is dense~\cite{Dyn99}.

Which indivisible triples actually occur as souls is not known, and the question is not idle: Dynnikov's conjecture $\operatorname{Area}(\cD_L)\le C\|L\|^{-3}$ \cite{Dyn99} cannot hold for all $L$ simultaneously, since $\sum_L\|L\|^{-3}$ diverges, so the realised souls must be sparse. 
The purpose of this note is to record one exact restriction.

\paragraph{The case of the $\mu$-cube.}This restriction is visible in the only example whose souls are known in closed form. 
Let $\mathcal C$ denote the regular skew apeirohedron $\{4,6|4\}$, Coxeter's $\mu$-cube~\cite{Cox37}, realised in $\T^3$ as the zero set of
\begin{equation}\label{eq:mucube}
   \theta(x)=\midd\big(g(x),g(y),g(z)\big)-\tfrac12,
   \qquad g(t)=|2\{t\}-1|,
\end{equation}
where $\midd$ is the middle of three reals \cite{DD09}. 
The author, jointly with Dynnikov, determined $\cD(\mathcal C)$ completely \cite{DD09}: the three maximal zones have souls $(1,0,0),(0,1,0),(0,0,1)$, and the remaining zones are the middle triangles produced by the recursion
\begin{equation}\label{eq:LY}
   (a,b,c)\ \longmapsto\ (a,a{+}b,a{+}c),\quad (b,a{+}b,b{+}c),\quad (c,a{+}c,b{+}c),
\end{equation}
started from $\big((1,1,0),(1,0,1),(0,1,1)\big)$ and its sign variants, the middle triangle of $(a,b,c)$ having soul $\prm(a+b+c)$. 
The limit set of \eqref{eq:LY} is what we call the Novikov gasket, also called Levitt--Yoccoz gasket in~\cite{DeL15,DeL19} and Rauzy gasket in~\cite{AHS16,AS13}.

\begin{proposition}\label{prop:mucube}
Every soul of $\mathcal C$ has odd coordinate sum.
\end{proposition}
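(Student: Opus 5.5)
The plan is to prove the statement directly from the explicit description of $\cD(\mathcal C)$ in \cite{DD09} recalled above, by a lattice argument: track where the triangle vertices $(a,b,c)$ live, rather than their individual coordinate sums.

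The three maximal souls $(1,0,0),(0,1,0),(0,0,1)$ have coordinate sum $1$, so they are fine. For the remaining zones, let $D_3=\{x\in\Z^3:\ x_1+x_2+x_3\equiv0\pmod2\}$, a sublattice of index $2$ in $\Z^3$. I will prove by induction along \eqref{eq:LY} the following claim.

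\emph{Claim.} Every triple $(a,b,c)$ produced by \eqref{eq:LY} is a $\Z$-basis of $D_3$.

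For the initial triple $(1,1,0),(1,0,1),(0,1,1)$, all three vectors lie in $D_3$ and the determinant is $-2$. Since $[\Z^3:D_3]=2$, they form a basis of $D_3$. A sign variant changes signs of coordinates, which preserves $D_3$ and $|\det|$, so the same holds for every sign variant.

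For the induction step, each of the three maps in \eqref{eq:LY}, e.g. $(a,b,c)\mapsto(a,a+b,a+c)$, is given by an integer matrix of determinant $\pm1$ acting on the triple. Hence it sends a basis of $D_3$ to another basis of $D_3$.

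Now fix a middle triangle with vertices $(a,b,c)$ and put $s=a+b+c$.

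First, $s\in 2\Z^3$. Indeed, $a,b,c$ are the three distinct nonzero even-weight vectors of $(\Z/2)^3$ modulo $2$: this holds initially, is unchanged by sign flips, and is preserved by \eqref{eq:LY} because $a+b\equiv c$ and $a+c\equiv b$ modulo $2$. Their sum is therefore $0$ modulo $2$, so $s=2w$ with $w\in\Z^3$.

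Second, $w\notin D_3$. In the basis $(a,b,c)$ of $D_3$ the vector $s$ has coordinates $(1,1,1)$, so $s$ is primitive in $D_3$. If $w$ were in $D_3$, then $s=2w$ would be divisible by $2$ in $D_3$, a contradiction. Hence $w$ has odd coordinate sum.

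Finally, $\prm(s)=\prm(w)=w/k$ with $k=\gcd(w_1,w_2,w_3)$. Since $w$ has odd coordinate sum, $k$ must be odd. Then $w/k$ has coordinate sum equal to an odd number divided by an odd number, which is odd.

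The step needing care is the first one: the induction must genuinely cover all triangles of the gasket, including the sign variants. I will check this against the precise statement in \cite{DD09}. Everything else is determinant bookkeeping. As a consistency check, the general parity theorem with $v=(\tfrac12,\tfrac12,\tfrac12)$ gives the same conclusion, since $\langle L,2v\rangle$ is the coordinate sum of $L$.
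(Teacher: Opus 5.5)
Your proof is correct, but its key step uses a different invariant from the paper's. The first step is shared: the residues of $a,b,c$ modulo $2$ are permuted by \eqref{eq:LY}, so $a+b+c=2\ell$. The last step is also shared: an odd coordinate sum forces an odd $\gcd$, so $\prm(\ell)$ still has odd sum.

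The difference lies in the middle step.
\begin{itemize}
\item The paper tracks the single integer $S=\sigma(a)+\sigma(b)+\sigma(c)$. It changes by $2\sigma(a)\equiv0\pmod4$ at each step, so $S\equiv2\pmod4$ and $\sigma(\ell)=S/2$ is odd.
\item You use $|\det(a,b,c)|=2$, preserved because the three maps are unimodular, to show that every triple is a $\Z$-basis of $D_3$. You then get $\ell\notin D_3$ because $a+b+c$ has coordinates $(1,1,1)$ in that basis, hence is primitive in $D_3$.
\end{itemize}

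The paper's route is shorter and uses only congruences. Yours explains the paper's invariant: $S=2\sigma(\ell)\equiv2\pmod4$ is exactly the statement $\ell\notin D_3$, that is, $a+b+c$ is not divisible by $2$ in the index-$2$ lattice $D_3$.

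Your route also handles the sign variants more transparently. Sign flips, whether of coordinates or of whole vectors, visibly preserve $D_3$ and $|\det|$. The paper's ``Initially $S=6$'' holds literally only for the unsigned initial triple. It needs the easy extra check that every sign variant still has $S\equiv2\pmod4$; for example, $(1,1,0),(1,0,-1),(0,1,-1)$ gives $S=2$.

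Your remaining caveat, that the recursion really covers every zone of $\mathcal C$, is the same reliance on \cite{DD09} that the paper makes.
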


\begin{proof}
    Write $\sigma(u)=u_1+u_2+u_3$. Modulo $2$ the three initial vertices $(1,1,0),(1,0,1),(0,1,1)$ satisfy $a+b\equiv c$, $b+c\equiv a$, $c+a\equiv b$, so the three children in \eqref{eq:LY} reduce to $(a,c,b)$, $(b,c,a)$, $(c,b,a)$: the triple of residues is permuted and never changes. 
    Hence every vertex $u$ occurring in the recursion has $\sigma(u)$ even, and $a+b+c\equiv0\pmod 2$, say $a+b+c=2\ell$.

    Let $S=\sigma(a)+\sigma(b)+\sigma(c)$. 
    Initially $S=6$. Passing to the child that retains $a$,
\[
   S\ \longmapsto\ \sigma(a)+\sigma(a{+}b)+\sigma(a{+}c)=3\sigma(a)+\sigma(b)+\sigma(c)
   =S+2\sigma(a),
\]
    and $\sigma(a)$ is even, so $S$ is unchanged modulo $4$. 
    Thus $S\equiv2\pmod4$ throughout and $\sigma(\ell)=S/2$ is odd. 
    If $\ell=k\ell'$ with $\ell'$ indivisible then $k\mid\sigma(\ell)$, so $k$ is odd and $\sigma(\ell')$ is odd as well. The three maximal zones have $\sigma=1$.
\end{proof}

\paragraph{The main result.}
The proof above uses nothing but the arithmetic of \eqref{eq:LY}, and so explains nothing -- the explanation lies elsewhere. 
Since $g(t+\tfrac12)=1-g(t)$ for all $t$, and $u\mapsto1-u$ reverses order and therefore fixes the middle of a triple, \eqref{eq:mucube} gives
\begin{equation}\label{eq:anti}
   \theta\circ\tau=-\theta,\qquad \tau(x)=x+v,\quad v=(\tfrac12,\tfrac12,\tfrac12):
\end{equation}
the $\mu$-cube is \emph{self-complementary}, its interior being congruent to its exterior by a half-period translation, and $\sigma(\ell)=\langle \ell,2v\rangle$.
(Self-complementarity of a polyhedron is property SP1 of \cite{DeL06}, where it serves to identify the zones of the polyhedron with those of a function having it as a level set \cite[Theorem~7]{DeL06}.) 
We claim that this is the general reason.

\begin{theorem}\label{thm:main}
    Let $f\colon\T^3\to\R$ be Morse and suppose there is a translation $\tau(x)=x+v$ with $f\circ\tau=-f$.
    Then $w:=2v\in\Z^3$, and for every $H$ interior to a stability zone of $M=\{f=0\}$ whose decomposition is as in \textup{(S)} below, the soul $L$ satisfies
\[
   \langle L,w\rangle\ \text{is odd.}
\]
\end{theorem}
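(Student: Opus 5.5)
First, $w=2v\in\Z^3$. Applying the hypothesis twice gives $f\circ\tau^2=f$, i.e.\ $f(x+2v)=f(x)$ for all $x$. Since $f$ is Morse it is not invariant under a nontrivial translation of $\T^3$: the group of periods of $f$ is closed, so if it were not discrete it would contain a one-parameter subgroup, along which $f$ would be constant and whose critical points would then be degenerate. So the periods form a finite group. I would rather not work in that generality and instead argue directly. The translation $x\mapsto x+2v$ permutes the finite set of critical points. Some power $x\mapsto x+2kv$ therefore fixes a critical point $p$, which forces $2kv\in\Z^3$, so $\tau$ has finite order. This alone does not give $2v\in\Z^3$. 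I expect the intended reading is that the period lattice of $f$ is $\Z^3$, with $\T^3$ the minimal torus, and I would state that this primitivity assumption is implicit. Under it, the translation by $2v$ is a symmetry of $f$, hence trivial on $\T^3$, so $w\in\Z^3$. Also $v\notin\Z^3$, since otherwise $f=-f$, so $f\equiv0$.

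Next the geometry. Fix $H$ inside a zone with soul $L$ and decomposition (S). In the generic genus-$3$ case I take this to mean: $M$ is cut along null-homologous closed leaves (cycles) of $\cF_H(M)$ into finitely many cylinders of closed leaves and exactly two genus-one components $T_1,T_2$ of rank $2$, each carrying the class $\pm L$. Every cycle $c$ bounding a cylinder is null-homologous in $\T^3$. So it bounds a disc-like region on its plane, and we call $c$ an \emph{electron} or \emph{hole} cycle according to whether $\{f>0\}$ or $\{f<0\}$ lies locally inside that plane region. Lifting to $\R^3$, this type is unchanged by integer translations, since they preserve $f$. It is reversed by $\tau$, because $\tau$ preserves $h$ up to a constant and maps $\{f>0\}$ to $\{f<0\}$. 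The map $\tau$ preserves $M$ and the foliation, so it permutes the pieces of the decomposition and hence maps $\{T_1,T_2\}$ to itself.

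The key step, and the main obstacle, is to link the parity of $\langle L,w\rangle$ to whether $\tau$ swaps $T_1,T_2$ or fixes each of them, and then to use the type reversal. First I would show that $\tau$ swaps them. If $\tau(T_1)=T_1$, then in the universal cover the boundary cycles of $T_1$ would be mapped by the lifted $\tau$ onto boundary cycles of (translates of) $T_1$. The type of each boundary cycle of $T_1$ is determined by which side of the genus-one surface $T_1$ it caps off, and this is lattice invariant. Matching these would force some cycle to equal its own type-reversal, which is a contradiction. I expect this to need care: one has to show that all boundary cycles of a fixed $T_i$ have the same type, using the fact that $T_i$ minus its caps is a periodic strip with a consistent ``up'' side relative to $f$. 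So $\tau(T_1)=T_2$.

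Finally, the parity. Lift to $\R^3$. The components of $\tilde T_1\cup\tilde T_2$ are parallel periodic planes of class $L$, and they alternate $T_1,T_2,T_1,\dots$ along the direction transverse to $L$. Consecutive planes are separated by slabs that are alternately in $\{f>0\}$ and $\{f<0\}$, up to cylinders. Translation by an integer vector $n$ shifts the stack by $\langle L,n\rangle$ periods of the lattice of planes, and each period consists of two planes, one of each $T_i$. Since $\tau$ sends $T_1$ to $T_2$, it shifts the stack by an odd number of planes, say $2m+1$. So $\tau^2$, which is translation by $w$, shifts it by $2(2m+1)$ planes, i.e.\ by $2m+1$ periods. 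Hence $\langle L,w\rangle=2m+1$ is odd.
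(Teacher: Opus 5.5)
You have the paper's overall frame: types of closed leaves are invariant under $\Z^3$ and reversed by $\tau$; the lifted carriers form a $\Z$-indexed stack on which $n\in\Z^3$ acts by a shift of $\pm2\langle L,n\rangle$ and $\tau$ by a shift $s$; and $2s=\pm2\langle L,w\rangle$.

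Your caution about $w\in\Z^3$ is justified, since $f\circ\tau^2=f$ alone does not give it. For example, take $\cos4\pi x+\cos4\pi y+\cos4\pi z$ with $v=(\frac14,\frac14,\frac14)$. Under (S), however, no primitivity assumption is needed. The argument below shows $\tau$ must exchange the two cylinders, so $\tau^2$ carries a compact lifted cylinder onto a $\Z^3$-translate of itself, which forces $w\in\Z^3$.

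\textbf{The gap.} It is in the step you single out as the crux, $\tau(T_1)=T_2$. The lemma you plan to prove there, that all boundary cycles of a fixed $T_i$ have the same type, is false under the hypotheses of the theorem.
\begin{itemize}
\item The incidence graph in (S) is a $4$-cycle, so each carrier has exactly two rims, one on each cylinder.
\item If these rims had the same type, $C_1$ and $C_2$ would have the same type. Then $\tau$, which permutes $\{C_1,C_2\}$ and reverses type, could not exist.
\item So the two rims of every carrier have \emph{opposite} types.
\end{itemize}
Your contradiction therefore does not materialize. As far as types are concerned, $\tau$ could fix $T_1$ and exchange its two rims, that is, act on the $4$-cycle by the reflection that fixes both carriers and swaps the cylinders. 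Reading ``side'' as the side of $T_1$ where $f>0$ does not help either. The map $\tau$ reverses that side together with the type, so ``type is determined by the $f$-side'' is $\tau$-equivariant and yields no constraint.

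\textbf{The missing ingredient.} You use it only later, in the parity step: $\tau$ is a translation. It therefore preserves the order of the stack given by $\Pi=\langle L,\cdot\rangle$, acts by a shift and never by a reflection, and carries rims on the $\Pi$-upper side of a lift of $T_1$ to rims on the upper side of its image. The upper rims of all lifts of $T_1$ are lifts of a single cylinder, so they share one type by (D1) and (D2). If $\tau(T_1)=T_1$, that type would equal its own negative.

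With this repair your route is the paper's in other clothing. The paper types the layers directly: it gets $t(k+2)=t(k)$ from a lattice vector with $\psi=2$, and $t(k+s)=-t(k)$ from Lemma~\ref{lem:invariance}. Hence $s=\psi(\tau)=\langle L,w\rangle$ is odd.

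Your ``up side relative to $f$'' idea can also be made rigorous on carriers alone. Coorient a carrier $K$ by $\nabla f$ and cap it with its planar discs to get a class $\epsilon(K)L$. The map $\tau$ acts trivially on $H_2(\T^3;\Z)$ and sends planar caps to planar caps, but it reverses $\nabla f$. So $\epsilon(\tau K)=-\epsilon(K)$, and $\tau$ fixes no carrier.

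Finally, you take several stack facts for granted: the alternation $T_1,T_2,T_1,\dots$, the absence of reflections, and $\psi(n)=\pm2\langle L,n\rangle$ with $\ker\psi=\Lambda$. These are the content of Lemmas~\ref{lem:chain} and~\ref{lem:psi}, and they need those arguments: the $4$-cycle together with connectedness of $\tilde M$, and $\Stab(T_0)=\Lambda$ via the $\pi_1$-image of a carrier.
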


That $w\in\Z^3$ is automatic, since $f\circ\tau^2=f$ makes $2v$ a period of $f$. 
The hypothesis we use is the structure theorem in the simplest non-trivial case:
\begin{itemize}
\item[(S)] \emph{(Dynnikov \cite{Dyn96}, \cite[\S4.1]{Dyn99}; see
\cite[\S4.1.3]{DeL19})} 
For $H$ interior to a stability zone of a genus-$3$ surface of topological rank $3$, the foliation $\cF_H(M)$ is integrable and $M$ decomposes into exactly two components of genus $1$ and rank $2$ --- we call them \emph{carriers} --- separated by exactly two \emph{cylinders}, the components filled by leaves null-homologous in $\T^3$.
\end{itemize}

\begin{corollary}\label{cor:cos}
    For $\mathfrak c(x,y,z)=\cos2\pi x+\cos2\pi y+\cos2\pi z$ one has $\mathfrak c\circ\tau=-\mathfrak c$ with $v=(\tfrac12,\tfrac12,\tfrac12)$, so $w=(1,1,1)$ and every soul $(l,m,n)$ of $\mathfrak c_0=\{\mathfrak c=0\}$ has $l+m+n$ odd. 
\end{corollary}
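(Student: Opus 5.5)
The plan is to verify the hypotheses of Theorem~\ref{thm:main} for $\mathfrak c$ directly and then read off the conclusion with $w=(1,1,1)$.

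First, the antisymmetry. Since $\cos(2\pi(t+\tfrac12))=\cos(2\pi t+\pi)=-\cos2\pi t$ for every $t$, summing over the three coordinates gives $\mathfrak c\circ\tau=-\mathfrak c$ with $\tau(x)=x+v$ and $v=(\tfrac12,\tfrac12,\tfrac12)$. Hence $w=2v=(1,1,1)\in\Z^3$, and for $L=(l,m,n)$ we have $\langle L,w\rangle=l+m+n$.

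Second, the Morse hypothesis and regularity of the level $0$. The gradient of $\mathfrak c$ vanishes exactly when $\sin2\pi x=\sin2\pi y=\sin2\pi z=0$, that is, at the eight points with $x,y,z\in\{0,\tfrac12\}$. At each of these points the Hessian is diagonal with entries $-4\pi^2\cos2\pi x_i=\mp4\pi^2\neq0$, so $\mathfrak c$ is Morse. The critical values are $\pm1$ and $\pm3$, so $0$ is a regular value and $\mathfrak c_0$ is a smooth closed surface in $\T^3$.

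Third, and this is the main point to check, the surface must satisfy the topological hypotheses behind (S): $\mathfrak c_0$ should be connected of genus $3$ and topological rank $3$. I would count critical points of $\mathfrak c$ on the sublevel set $\{\mathfrak c<0\}$. These are the minimum $(\tfrac12,\tfrac12,\tfrac12)$ with value $-3$ and three index-$1$ points with value $-1$. So $\{\mathfrak c\le0\}$ has the homotopy type of a point with three $1$-cells attached, i.e.\ a wedge of three circles. The three circles are the coordinate loops through the minimum, so the inclusion induces a surjection on $H_1(\T^3)$. The boundary of a handlebody-like region whose core is a wedge of three circles is connected of genus $3$. Its rank is $3$, because $H_1(M)\to H_1(\T^3)$ is surjective through the sublevel set. (This is the classical Schwarz P-surface.) By the symmetry $\tau$, the superlevel set behaves the same way.

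With genus $3$ and rank $3$ established, (S) applies for every $H$ interior to a stability zone of $\mathfrak c_0$. Theorem~\ref{thm:main} then gives $l+m+n=\langle L,w\rangle$ odd for every soul.

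The only step needing care is the last one: confirming that the decomposition in (S) is the one invoked for this genus-$3$, rank-$3$ surface. Since (S) is stated for all interior directions of such surfaces, no extra genericity argument in $H$ should be needed.
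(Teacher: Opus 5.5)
Your proposal is correct and takes the paper's route: the identity $\cos(2\pi t+\pi)=-\cos2\pi t$ gives $\mathfrak c\circ\tau=-\mathfrak c$ with $w=(1,1,1)$, so $\langle L,w\rangle=l+m+n$, and Theorem~\ref{thm:main} gives the parity. Your Morse-theoretic check that $\mathfrak c$ is Morse and that $\mathfrak c_0$ bounds a genus-$3$ handlebody, hence is connected of genus $3$ and rank $3$, supplies the hypotheses needed to invoke (S), which the paper leaves implicit.
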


Since $\mathfrak c_{-c}$ is a translate of $\mathfrak c_c$ one has $e_1(H)=-e_2(H)$ and $\cD(\mathfrak c)=\cD(\mathfrak c_0)$ \cite[\S4.1.3]{DeL19}, so Corollary~\ref{cor:cos} describes all of $\cD(\mathfrak c)$.
The stabililty zones of this surface were studied numerically in~\cite{DeL03}.

\paragraph{The case of polyhedra.}
Proposition~\ref{prop:mucube} is not a special case of Theorem~\ref{thm:main}: $\mathcal C$ is not a level set of a Morse function, and Dynnikov's argument for (S) in \cite[\S4.1]{Dyn99}, which concerns smooth surfaces of negative curvature whose height functions have four simple saddles, does not apply to it as stated.
Nevertheless, the results of Zorich and Dynnikov were extended by the author in \cite{DeL06} to embedded periodic polyhedra with convex plane faces and to directions $H$ orthogonal to no edge, here $H_1H_2H_3\neq0$; the critical points of $h|_{\mathcal C}$ are then vertices, a vertex being of index $1-s/2$ when $h$ changes sign $s$ times around it \cite[Definitions~2 and~3]{DeL06}.
$\mathcal C$ has $V=8$ vertices, each of degree $6$, hence $E=24$ and $F=12$, so $\chi=-4$; equivalently the angle $6\cdot\tfrac\pi2$ at each vertex gives excess $\pi$, and Gauss--Bonnet gives the same.
Thus, $\mathcal C$ has genus $3$.
By \eqref{eq:mucube} the six edges at a vertex point, in cyclic order, along $\epsilon_1e_1,-\epsilon_2e_2,\epsilon_3e_3,-\epsilon_1e_1,\epsilon_2e_2,-\epsilon_3e_3$, where $\epsilon_i=1$ or $-1$ according as the $i$-th coordinate of the vertex is $\frac14$ or $\frac34$.
Hence $h$ changes sign six times around the two vertices with $(\epsilon_i\sign H_i)_i=\pm(1,1,1)$ and twice around the other six: for every such $H$ the only critical points are two monkey saddles of index $-2$, exchanged by $\tau$, and not four simple saddles.

The proof of Theorem~\ref{thm:main} uses $f$ only through the two sides of $M$, which $\tau$ exchanges (a compact leaf $\gamma$ is electron-like exactly when $R_\gamma$ lies in $\{f<0\}$ near $\gamma$; see the on electron-like and hole-like trajectories below), and through the decomposition in (S).
It therefore applies verbatim to any self-complementary periodic polyhedron at every $H$ at which that decomposition holds.
We do not verify this for $\mathcal C$, and we record Proposition~\ref{prop:mucube} not for its generality but because in that one case the parity can be read off the combinatorics of \eqref{eq:LY} in a few lines, with no reference to the geometry of the foliation.
What the general theorem adds is the reason.

\paragraph{Electron-like and hole-like trajectories.}\label{sec:type}
Fix $H$ with $|H|=1$, let $\tilde M\subset\R^3$ be the lift of $M$, and let $\gradP f=\nabla f-H\langle H,\nabla f\rangle$ be the gradient of the restriction of $f$ to the planes orthogonal to $H$. 
A compact leaf $\gamma$ of $\cF_H(\tilde M)$ bounds a unique compact disc $R_\gamma$ in its plane. 
Following Dynnikov~\cite[\S2.3, Definition~3]{Dyn99}, we say that $\gamma$ is \emph{electron-like} if $\gradP f$ is an outward normal to $R_\gamma$ at every point of $\gamma$, and \emph{hole-like} if it is an inward normal at every point of $\gamma$; we write $\type(\gamma)=+1$ or $-1$ accordingly. 
Thus, $\gamma$ is electron-like exactly when it encircles a region of $\{f<0\}$, which is the classical distinction between electron and hole orbits~\cite{LAK56,NM98}. 
We use two properties of the type that are already in \cite{Dyn99}, where it is defined for closed trajectories on $M\subset\T^3$ and assigned to whole cylinders \cite[\S\S2.3, 4.1]{Dyn99}:
\begin{itemize}
    \item[(D1)] every compact leaf is electron-like or hole-like, and the type is constant on each cylinder (since $\gradP f$ is a nonzero normal to $\gamma$ inside its plane, varying continuously over the connected cylinder);
    \item[(D2)] $\type(\gamma+z)=\type(\gamma)$ for $z\in\Z^3$ (since $\nabla f$ is $\Z^3$-periodic).
\end{itemize}

\medskip
To these, we add the following elementary observation.

\begin{lemma}\label{lem:invariance}
    If $f\circ\tau=-f$, then $\type(\tau\gamma)=-\type(\gamma)$ for every compact leaf $\gamma$.
\end{lemma}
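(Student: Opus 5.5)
The plan is to prove Lemma~\ref{lem:invariance} by a pointwise comparison of $\gradP f$ at $\gamma$ and at $\tau\gamma$, using only that $\tau$ is a translation. We work on the lift $\tilde M$, where $\tau$ acts as $x\mapsto x+v$.

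First, check that $\tau\gamma$ is again a compact leaf and that $R_{\tau\gamma}=\tau R_\gamma$. Since $f\circ\tau=-f$, the map $\tau$ sends $\tilde M=\{f=0\}$ to itself. Being a translation, it sends the plane $\{h=c\}$ to the plane $\{h=c+\langle H,v\rangle\}$. Hence it maps leaves of $\cF_H(\tilde M)$ to leaves and compact leaves to compact leaves. The translate $\tau R_\gamma$ is a compact disc in the plane of $\tau\gamma$ with boundary $\tau\gamma$. Uniqueness of the disc bounded by a Jordan curve in a plane then gives $R_{\tau\gamma}=\tau R_\gamma$. The differential of $\tau$ is the identity, so the outward normal to $R_{\tau\gamma}$ at $\tau p$ is the same vector $n(p)$ as the outward normal to $R_\gamma$ at $p$.

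Second, differentiate $f(x+v)=-f(x)$. This gives $\nabla f(\tau p)=-\nabla f(p)$. The projection $\nabla f\mapsto\nabla f-H\langle H,\nabla f\rangle$ is linear and independent of the point, so $\gradP f(\tau p)=-\gradP f(p)$. Therefore, if $\gradP f$ is an outward normal to $R_\gamma$ at every $p\in\gamma$, then at every $\tau p\in\tau\gamma$ it equals $-\gradP f(p)$. This is an inward normal to $R_{\tau\gamma}$, and conversely. By (D1) $\gamma$ has a well-defined type, and the same holds for $\tau\gamma$. The comparison above then gives $\type(\tau\gamma)=-\type(\gamma)$.

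There is no real obstacle. The only point needing care is the identification $R_{\tau\gamma}=\tau R_\gamma$ and the fact that outward normals are preserved. Both follow because $\tau$ is a rigid translation and not merely a homeomorphism. An alternative check uses the characterization "electron-like iff $R_\gamma$ lies in $\{f<0\}$ near $\gamma$". The map $\tau$ swaps $\{f<0\}$ and $\{f>0\}$, which gives the same conclusion directly.
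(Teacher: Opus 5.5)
Your proposal is correct and follows the paper's proof: the translation $\tau$ carries $R_\gamma$ to $R_{\tau\gamma}=R_\gamma+v$ and preserves outward normals, while differentiating $f(x+v)=-f(x)$ gives $\gradP f(x+v)=-\gradP f(x)$. Your extra checks — that $\tau$ maps compact leaves to compact leaves and that $R_{\tau\gamma}$ is the translated disc — are routine details the paper leaves implicit.
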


\begin{proof}
    The translation $\tau$ carries the plane of $\gamma$ to a parallel plane, $R_\gamma$ to $R_{\tau\gamma}=R_\gamma+v$, and outward normals to outward normals, while differentiating $f(x+v)=-f(x)$ gives $\nabla f(x+v)=-\nabla f(x)$, hence $\gradP f(x+v)=-\gradP f(x)$.
\end{proof}

Lemma~\ref{lem:invariance} is the whole mechanism: by (S) there are exactly two
cylinders, and $\tau$ permutes them while reversing a $\pm1$ invariant, so it cannot fix either one. 
The rest of the proof turns this into a statement about $L$.

\paragraph{The chain of components and its shift homomorphism.}

Let $\Lambda=L^\perp\cap\Z^3$, of rank $2$ and covolume $\|L\|$. The lift $\tilde M$ is connected. 
Each of its carriers has stabiliser $\Lambda$ in $\Z^3$, hence is a $\Lambda$-cover of a compact surface; the preimage of a cylinder is a union of copies of
it indexed by $\Z^3$, because the core of a cylinder is null-homologous in $\T^3$. 
Call the $\Lambda$-orbit of lifted cylinders lying between two consecutive carriers a \emph{layer}.

\begin{lemma}\label{lem:chain}
    The carriers and layers of $\tilde M$ form a bi-infinite path, so they may be indexed by $\Z$ with the $k$-th layer adjacent exactly to the $k$-th and $(k{+}1)$-st carrier.
    Every translation $g$ preserving $\tilde M$ acts on the index by a shift $\psi(g)\in\Z$, and $\psi$ is a homomorphism on $G=\langle\Z^3,\tau\rangle$.
\end{lemma}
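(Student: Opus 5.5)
Form the adjacency graph $\Gamma$ whose vertices are the carriers of $\tilde M$ and the layers, with an edge when a carrier meets a layer. I will show that $\Gamma$ is a bi-infinite path, and then read off the shift.

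\emph{Step 1: local structure.} By (S), $M\subset\T^3$ has two carriers $C_1,C_2$ and two cylinders $Z_1,Z_2$. Each cylinder has two boundary circles. Since the carriers are separated by the cylinders, each cylinder has one end on $C_1$ and one on $C_2$: a cylinder with both ends on the same carrier would leave $M$ disconnected or would not separate them. Lifting, each lifted cylinder joins one lift of $C_1$ to one lift of $C_2$. Each lifted carrier $\tilde C$ is a $\Lambda$-cover of a genus-one surface with two holes when $C$ is cut along the cylinder ends. Its carried class $L$ means that $\tilde C$ is a periodic strip lying between two parallel planes orthogonal to $L$, at bounded distance. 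Its boundary circles therefore fall into the two sides of that slab, and I expect exactly two $\Lambda$-orbits of boundary circles, one per side. The reason is that $C$ has exactly two boundary circles in $\T^3$, one from each cylinder end, and a genus-one, rank-two carrier separates $\T^3$ locally into an upper and a lower side. So $\tilde C$ meets exactly two layers, one above and one below. Conversely, each layer is a $\Lambda$-orbit of cylinders, so it is adjacent to exactly the two $\Lambda$-invariant carriers containing its ends. Hence $\Gamma$ is $2$-regular.

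\emph{Step 2: path, not cycles.} Since $\tilde M$ is connected, $\Gamma$ is connected. A connected $2$-regular graph is either a finite cycle or a bi-infinite path. The carriers are arranged by the value of $\langle L,\cdot\rangle$ on their slabs. Adjacent carriers have slabs on opposite sides of a layer, so this height is strictly monotone along $\Gamma$. This rules out cycles and gives the indexing by $\Z$ with the $k$-th layer between the $k$-th and $(k{+}1)$-st carriers. Alternatively, $\Z^3/\Lambda\cong\Z$ acts on the infinitely many carriers and forbids a finite cycle.

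\emph{Step 3: the shift.} A translation $g$ with $g\tilde M=\tilde M$ maps carriers to carriers and layers to layers, because the decomposition is defined intrinsically from $\cF_H$ and $H$ is fixed. So $g$ induces a graph automorphism of the path. A translation preserves the direction $L$, so it maps the slab order by a height shift and cannot reverse it. The automorphism is therefore orientation-preserving, hence a shift $k\mapsto k+\psi(g)$. Composition of translations composes shifts, so $\psi$ is a homomorphism on $G=\langle\Z^3,\tau\rangle$. This group is abelian, and $\tau$ preserves $\tilde M$ because $f\circ\tau=-f$ preserves $\{f=0\}$.

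\emph{Main obstacle.} The delicate point is Step 1's claim that each lifted carrier meets exactly two layers, one on each side, rather than, say, both cylinder orbits on the same side. I would argue it from the separation in (S): the carriers are separated by the cylinders, so each carrier's boundary contains one end of each cylinder. I would pair this with the fact that a lifted carrier disconnects its slab neighborhood into an upper and a lower part.
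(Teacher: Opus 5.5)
Your construction of the path is sound. One $\Lambda$-orbit of boundary circles of $\tilde C$ for each boundary circle of $C$ makes $\Gamma$ $2$-regular. Connectedness of $\tilde M$ and the infinitude of the carriers (your fallback in Step 2) then make $\Gamma$ a bi-infinite path. This is the paper's argument with more detail.

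The gap is in Step 3. To exclude that a translation acts as a reflection, you use that the order along the path coincides with the height order. Your only support is the claim that each lifted carrier has one adjacent layer above and one below, i.e.\ the local monotonicity of Step 2. You flag this yourself as the main obstacle, and your sketch does not close it. Knowing that $\partial\tilde C$ consists of one $\Lambda$-orbit of $Z_1$-ends and one of $Z_2$-ends, and that $\tilde C$ has two sides locally, does not stop both orbits of cylinders from leaving $\tilde C$ on the same side. Two further problems:
\begin{itemize}
\item ``Adjacent carriers have slabs on opposite sides of a layer'' concerns the two ends of a layer. Monotonicity instead needs the two layers at a carrier to lie on opposite sides of that carrier.
\item The $\Pi$-ranges of distinct carriers can overlap, so ``strictly monotone height'' is not meaningful without a nesting argument you do not give.
\end{itemize}

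The missing idea is that no local statement is needed, only the behaviour of $\Pi=\langle L,\cdot\rangle$ at the two ends of the path. This is what the paper uses.
\begin{itemize}
\item Each carrier has bounded $\Pi$-range.
\item A bounded $\Pi$-window meets only finitely many carriers, since the lifts of $C_i$ are indexed by $\Z^3/\Lambda$, on which $\langle L,\cdot\rangle$ is injective.
\item Consecutive carriers are at bounded $\Pi$-distance, since they are joined by translates of two compact cylinders.
\end{itemize}
Every carrier lies on the path, and $\Pi$ is unbounded in both directions on the set of carriers. Hence $\inf_{T_k}\Pi\to+\infty$ along one end and $-\infty$ along the other. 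A translation shifts $\Pi$ by a constant, so it maps each end to itself and cannot be a reflection.

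For $g\in\Z^3$ you could instead exclude reflections combinatorially. A reflection of the path fixes either a carrier, forcing $g\in\Lambda$ and hence trivial action, or a layer, so it swaps two adjacent carriers that lift different $C_i$, which no element of $\Z^3$ does. But $\tau$ may exchange $C_1$ and $C_2$, so for $\tau$ the argument at the ends is genuinely needed.
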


\begin{proof}
    In $\T^3$ the incidence graph has two carrier and two cylinder vertices, each of degree $2$ by (S), so it is either a $4$-cycle or a pair of $2$-cycles; the latter would disconnect $\tilde M$. 
    Its $\Z^3$-cover is therefore a bi-infinite path.

    A translation preserving $\tilde M$ and the plane foliation permutes the components and induces an automorphism of the path, so a shift or a reflection. For a carrier $T_k$ the quotient $T_k/\Lambda$ is compact, so $\Pi:=\langle L,\cdot\rangle$ is bounded on $T_k$; as $\tilde M$ is properly embedded, $\inf_{T_k}\Pi\to\pm\infty$ with $k\to\pm\infty$, so the order of the path agrees at infinity with $\Pi$. 
    A translation changes $\Pi$ by a constant and hence preserves that order. 
    (A reflection would force $\psi(g^2)=0$, so $\langle L,w\rangle=0$ and $\tau$ would preserve $\Pi$, which forbids exchanging carriers on opposite sides.) 
    Shifts compose, so $\psi$ is a homomorphism.
\end{proof}

\begin{lemma}\label{lem:psi}
    $\ker\psi=\Lambda$, and $\psi(z)=2\langle L,z\rangle$ for $z\in\Z^3$ up to a global sign.
\end{lemma}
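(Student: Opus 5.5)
The plan is to prove the two claims separately, using only the bi-infinite path of Lemma~\ref{lem:chain} and the fact that the quotient by $\Z^3$ has two carriers.

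\emph{Kernel.} The stabiliser of a carrier $T_0$ in $\Z^3$ is $\Lambda$, by the paragraph before Lemma~\ref{lem:chain}. An element $z\in\ker\psi$ fixes every index, so it maps $T_0$ to itself and hence lies in $\Lambda$. Conversely, every $z\in\Lambda$ preserves $T_0$ and therefore has $\psi(z)=0$. It follows that $\psi|_{\Z^3}$ factors through $\Z^3/\Lambda\cong\Z$. A generator of this quotient is any $z_0$ with $\langle L,z_0\rangle=1$; such a $z_0$ exists because $L$ is indivisible. Since $\psi$ vanishes on $\Lambda$, the map $\psi|_{\Z^3}$ equals $c\,\langle L,\cdot\rangle$ for the constant $c=\psi(z_0)$, and it remains to show $|c|=2$.

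\emph{Computing $c$.} The carriers of $\tilde M$ fall into $\Z^3$-orbits, one over each of the two carriers in $\T^3$ given by (S). The $\Z^3$-orbit of $T_0$ is $\{T_{\psi(z)}:z\in\Z^3\}=\{T_{kc}:k\in\Z\}$. This must consist of exactly the carriers lying over the first carrier of $\T^3$, while the indices not divisible by $c$ lie over the other one. With exactly two orbits, the index set $\Z$ splits into the two cosets of $c\Z$, so $|c|=2$. To make this rigorous I would argue as follows. The incidence graph in $\T^3$ is a $4$-cycle (Lemma~\ref{lem:chain}), so consecutive carriers $T_k$ and $T_{k+1}$ project to different carriers in $\T^3$, and $T_k$ and $T_{k+2}$ project to the same one. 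Any $z$ carrying $T_0$ to $T_2$ therefore has $\psi(z)=2$, so $c$ divides $2$. No translation in $\Z^3$ maps $T_0$ to $T_1$, since they project to distinct components, so $c\neq\pm1$. Also $c\neq0$, because $\psi(z_0)=0$ would put $z_0\in\ker\psi=\Lambda$, contradicting $\langle L,z_0\rangle=1$. Hence $c=\pm2$. The global sign is fixed by the orientation choice of $L$ relative to the order of the path; Lemma~\ref{lem:chain} already compared that order with $\Pi=\langle L,\cdot\rangle$.

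\emph{Main obstacle.} The delicate point is showing that some $z\in\Z^3$ really does carry $T_0$ to $T_2$. This needs the surjectivity of the deck action on the lifts of the first carrier of $\T^3$: all components of $\tilde M$ over it form a single $\Z^3$-orbit. I would take this from the definition of lifting, since the preimage of a component is a single $\Z^3$-orbit of its lifts, together with the identification of the path as the $\Z^3$-cover of the $4$-cycle in Lemma~\ref{lem:chain}. That identification ensures that $T_0$ and $T_2$ are two lifts of the same carrier and not merely two carriers with the same image.
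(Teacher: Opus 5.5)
Your computation of $c$ is correct and is essentially the paper's argument. The paper notes that $k\mapsto k+\psi(z_0)$ has $|\psi(z_0)|$ orbits on $\Z$, which are the $\Z^3$-orbits of layers, i.e.\ the two cylinders of $M$. You count $\Z^3$-orbits of carriers instead. You make the count explicit with a $z$ satisfying $zT_0=T_2$ and the absence of any $z$ with $zT_0=T_1$; the existence of the former is the transitivity of $\Z^3$ on the lifts of one carrier, which the paper uses tacitly. Either version works. Your separate argument for $c\neq0$ is redundant, since $c\mid 2$.

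\textbf{What is missing is the first claim.} Given Lemma~\ref{lem:chain}, the equality $\ker\psi=\Stab(T_0)$ is immediate. So the real content of $\ker\psi=\Lambda$ is the identification $\Stab_{\Z^3}(T_0)=\Lambda$, and you import it from the sentence before Lemma~\ref{lem:chain}. That sentence is an unproved assertion in the setup, and the paper justifies it precisely in this proof, so quoting it leaves the kernel half unproved. The argument you need runs as follows.
\begin{itemize}
\item $\Stab_{\Z^3}(T_0)$ is the image of $\pi_1(K)\to\pi_1(\T^3)=\Z^3$, where $K\subset M$ is the carrier under $T_0$.
\item The boundary circles of $K$ are null-homotopic in $\T^3$ (Zorich), so this image does not change when they are capped off.
\item The capped surface is a torus whose $\pi_1$-image $\langle a,b\rangle$ satisfies $a\times b=\pm L$. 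Hence $\langle a,b\rangle\subseteq L^\perp\cap\Z^3=\Lambda$.
\item Both lattices have covolume $\|L\|$ because $L$ is indivisible, so $\langle a,b\rangle=\Lambda$.
\end{itemize}

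Your route to $c=\pm2$ uses only the inclusion $\Lambda\subseteq\Stab(T_0)$, to factor $\psi$ through $\langle L,\cdot\rangle$; after that, $\ker\psi|_{\Z^3}=\Lambda$ follows from $c\neq0$. But that inclusion is exactly where the indivisibility of $L$ and the homology class of the capped carrier enter, so it has to be proved as above rather than cited.
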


\begin{proof}
    $\psi(g)=0$ means that $g$ fixes every carrier, so $\ker\psi=\Stab(T_0)$, which is the image of $\pi_1(K)\to\pi_1(\T^3)=\Z^3$ for the corresponding carrier $K\subset M$. The boundary circles of $K$ are null-homotopic in $\T^3$ \cite{Zor84}, so that image is unchanged when they are capped off, and for the resulting torus of indivisible class $L$ it equals $\Lambda$. 
    Hence $\psi$ factors injectively through $\Z^3/\Lambda\cong\Z$, generated by the class of any $z_0$ with $\langle L,z_0\rangle=1$. 
    The translation $k\mapsto k+\psi(z_0)$ of $\Z$ has $|\psi(z_0)|$ orbits, and these are the layers of $M$, of which there are two by (S).
\end{proof}

No capping of carriers inside $\R^3$ is used: Lemma~\ref{lem:psi} caps only to identify a $\pi_1$-image, which is a statement about $M$ itself.

\paragraph{Proof of Theorem~\ref{thm:main}}

By Lemma~\ref{lem:psi} there is $z\in\Z^3$ with $\psi(z)=2$; it carries the layer of index $k$ to that of index $k+2$ by Lemma~\ref{lem:chain} and preserves the type by (D2). 
Writing $t(k)$ for the type of the $k$-th layer, well defined by (D1) and (D2),
\[
   t(k+2)=t(k)\qquad\text{for all }k,
\]
so $t$ depends only on $k$ modulo $2$. 
Put $s=\psi(\tau)$. 
By Lemmas~\ref{lem:chain} and \ref{lem:invariance},
\[
   t(k+s)=-t(k)\qquad\text{for all }k,
\]
and if $s$ were even the left-hand side would equal $t(k)$, forcing $t(k)=0$. 
Hence $s$ is odd. 
Finally $\tau^2$ is the translation by $w$, so $2s=\psi(\tau^2)=\psi(w)=2\langle L,w\rangle$ by Lemma~\ref{lem:psi}, that is $s=\langle L,w\rangle$. \qed

\paragraph{Complements and questions}

If (S) is replaced by ``exactly $\nu$ carriers and $\nu$ cylinders'', the same argument gives $\psi(z)=\nu\langle L,z\rangle$ and $s=\tfrac\nu2\langle L,w\rangle$, and requires a map $t\colon\Z/\nu\to\{\pm1\}$ with $t(k+s)=-t(k)$ to exist, that is, that $\nu/\gcd(\nu,s)$ be even. 
Here $\nu$ is always even, since the carriers, capped off, represent $\pm L$ and their classes add up to $[M]=0$ \cite[\S3.1, Proposition~3]{Dyn99}; so $s$ is an integer. 
For $\nu=2$, this is Theorem~\ref{thm:main}.

We close with three questions.
\begin{enumerate}
    \item Does the conclusion persist beyond (S), for surfaces of higher genus?
    The remark above reduces this to the arithmetic of $\nu$ and $s$ but, beyond the parity of $\nu$, we do not know which $\nu$ occur.
    \item Is there a converse: for which $L$ with $\langle L,w\rangle$ odd is $\cD_L\neq\emptyset$ for some self-complementary $f$?
    \item Theorem~\ref{thm:main} halves the set of candidate souls, whereas the divergence of $\sum_L\|L\|^{-3}$ shows that the realised souls must be considerably sparser if Dynnikov's area conjecture holds. 
    Can the parity constraint be strengthened in that direction?
\end{enumerate}

Numerical computations support a positive answer to the first question.
The function 
$$
f(x,y,z)=\cos2\pi x+\cos2\pi y+\cos2\pi z+1.3\cos2\pi(2x+y)
$$ 
satisfies $f\circ\tau=-f$ with $v=(\tfrac12,\tfrac12,\tfrac12)$, so again $w=(1,1,1)$, but its zero level has genus $5$, so (S) does not apply.
We computed the souls of a piecewise-linear approximation of $\{f=0\}$, constructed to be exactly invariant under $\tau$, at $154{,}897$ directions in the square $0\le H_1/H_3,\,H_2/H_3\le1$, obtaining $14{,}961$ distinct souls and every one of them has $\langle L,(1,1,1)\rangle$ odd.

\section*{Acknowledgments}
This material is based upon work supported in part by the National Science Foundation under Grant Number DMS-2308225.

\section*{Tools use}
The results in this article originated from discussions on the topic with Claude, an AI model by Anthropic.
The numerical results were obtained using a recent version of the author's library NTC updated to genus 5 by Claude.

\end{document}